\titleformat{\section}{\large\bfseries}{\thesection.}{1em}{}
\titleformat{\subsection}{\normalsize\bfseries}{\thesubsection.}{1em}{}
\theoremstyle{definition}
\newtheorem{definition}{Definition}[section]
\theoremstyle{plain}
\newtheorem{theorem}{Theorem}[section]
\newtheorem{proposition}{Proposition}[section]
\theoremstyle{remark}
\newtheorem{remark}{Remark}[section]
\title{\textbf{Formalising Propositional Information via Implication Hypergraphs}}
\author{
    \textsc{Vibhu Dalal} \\[0.6em]
    {\small \text{Université Paris Cité}}
}
\date{} 
\begin{document}

\maketitle


\begin{abstract}
This work introduces a framework for quantifying the information content of logical propositions through the use of implication hypergraphs. We posit that a proposition's informativeness is primarily determined by its relationships with other propositions; specifically, the extent to which it implies or derives other propositions. To formalize this notion, we develop a framework based on implication hypergraphs, that seeks to capture these relationships. Within this framework, we define propositional information, derive some key properties, and illustrate the concept through examples. While the approach is broadly applicable, mathematical propositions emerge as an ideal domain for its application due to their inherently rich and interconnected structure. We provide several examples to illustrate this and subsequently discuss the limitations of the framework, along with suggestions for potential refinements.
\end{abstract}

\section{Introduction}

Information theory provides a mathematical definition of the 'information' of an event, expressing it as a function of the event's probability. This formalism was first introduced in Claude Shannon's seminal work, \emph{A mathematical theory of communication} \cite{https://doi.org/10.1002/j.1538-7305.1948.tb01338.x}. In this context, we extend this concept to logical propositions: how can we quantify the information content of a proposition? An information-theoretic perspective might suggest treating logical propositions as events and then calculating their Shannon information. However, a key challenge arises, as there is no straightforward probability distribution that can be assigned to these 'events' (logical propositions).

One potential solution involves heuristically estimating the probability of a proposition's 'occurrence', perhaps by analyzing its frequency within a relevant corpus or body of literature. While intriguing, this approach is both complex and, more critically, misaligned with our intuitive understanding of what makes a statement informative. In this framework, a proposition with a low probability (or low frequency) would inherently be assigned a higher information value. However, from an intuitive standpoint, it is unclear why rarity alone should determine the informativeness of a proposition.

Several studies have explored methods for representing or quantifying logical propositions in various forms. For example, \cite{logicasvector} introduces a framework where propositions are modeled as vectors in a logical space, while \cite{propspace} proposes the concept of a proposition space with associated logical transformations. In \cite{inproceedings}, the focus is on exploring 'degrees' of information and contradiction within a propositional framework, whereas \cite{propformulae} uses implication hypergraphs for the representation and simplification of propositional formulae. However, to the best of our knowledge, none of the works explicitly addresses the task of numerically quantifying the information content of a logical proposition.

We propose that the informativeness of a proposition is intrinsically linked to its relationships with other propositions. More precisely, a proposition's informativeness can be quantified by the number of other propositions derivable from it, that is, the extent of its implications. This perspective aligns with the conceptual notion of an informative statement: one that establishes numerous meaningful connections or implications.

To formalize this idea, we use implication hypergraphs, which provide a flexible and strctured framework to represent these relationships. Within this framework, we define propositional information, derive some key results and properties, and illustrate the concept through examples. Prior to this, we introduce some preliminary definitions and terminology that will be used throughout the text.

While the framework we present is general, its primary focus is on mathematical propositions. Mathematics presents a vast and intricate web of interconnected propositions, where each connection stems from a previously established theorem or result. This inherent structure makes it an ideal domain for using implication hypergraphs to compute propositional information. As a simple example, stating that a set is 'compact' is usually far more informative than stating it is 'closed'. This is because, firstly, compactness implies closure (in a Hausdorff space), and secondly, compactness plays a central role in numerous theorems and results in analysis. The tools developed in this work not only enable such comparisons of informativeness but also provide a method to quantify it. In Section~6, we present a couple of examples illustrating this idea.


\section{Preliminary Definitions}

\begin{definition}[Hypergraph]\label{def:hypergraph}
A hypergraph generalizes a graph by allowing a hyperedge to connect any number of nodes. It is represented as \( H = (V, E) \), where \( V \) is the set of vertices, and \( E \) is the set of hyperedges.

For a directed hypergraph, a hyperedge is an ordered pair \( e = (T, H) \) with \( T, H \subseteq V \) as the \emph{tail} (source nodes) and \emph{head} (target nodes), respectively. We denote \( t:E \to P(V) \) and \( h:E \to P(V) \) such that \( t(e) = T \) and \( h(e) = H \). Additionally, \( L(H) \) represents the set of leaf nodes of \( H \).
\end{definition}

As a simple example of a directed hypergraph, consider the hypergraph \(H = (V, E)\), where \(V = \{v_1, v_2, v_3\}\) is the set of vertices, and \(E = \{(\{v_1, v_2\}, \{v_3\})\}\) is the set of directed hyperedges. The directed hyperedge \((\{v_1, v_2\}, \{v_3\})\) consists of the \emph{tail} \(\{v_1, v_2\}\), which represents the source vertices, and the \emph{head} \(\{v_3\}\), which represents the target vertex.

\begin{definition}[Implication Hypergraph]\label{def:implication_hypergraph}
An \emph{implication hypergraph} is a specialized directed hypergraph used to represent implication relationships between logical propositions. Formally, it is a pair:
\[
H = (V, E),
\]
where:
\begin{itemize}
    \item \(V\) is the set of vertices, each representing a logical proposition,
    \item \(E\) is the set of directed hyperedges, where each hyperedge \(e = (T, H)\) represents the relation:
    \[
    \bigwedge_{P\in T} P \implies \bigwedge_{Q\in H}Q
    \]
\end{itemize}
Directed hyperedges in implication hypergraphs represent logical relationships, where the set of propositions in the tail collectively imply the proposition(s) in the head. Figure~\ref{fig:impli_hyp} shows a simple example of an implication hypergraph.

\end{definition}

\begin{figure}[h!] 
    \centering
    \includegraphics[width=0.37\textwidth]{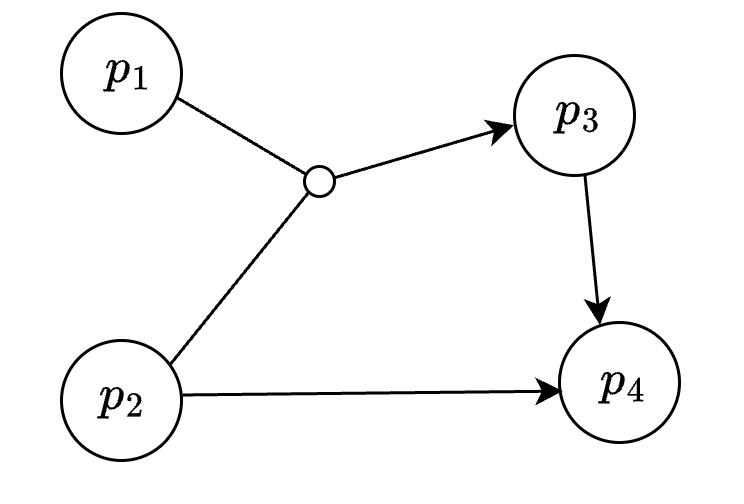}
    \caption{An example of an implication hypergraph with hyperedges \( (\{p_1, p_2\}, \{p_3\}) \), \( (\{p_3\}, \{p_4\}) \), and \( (\{p_4\}, \{p_2\}) \).}
    \label{fig:impli_hyp}
\end{figure}

\begin{definition}[Strict Implication Hypergraph]\label{def:implication_hypergraph}
A \emph{strict} implication hypergraph is an implication hypergraph \( H = (V, E) \) that satisfies the following property: 
\[
\forall v,w \in V, (v,w)\in E\implies(w,v)\notin E.
\]
\end{definition}

Implication hypergraphs have been previously employed to represent logical propositions (\cite{propformulae}, \cite{Verdee2024}). In this work, we adopt them as a tool to aid in the definition of propositional information. We start by introducing a specific type of implication hypergraph, which we call a \emph{minimal} implication hypergraph. It is characterized by the absence of redundant or trivial hyperedges.

\begin{definition}[Minimal Implication Hypergraph]\label{def:implication_hypergraph}
A \emph{minimal} implication hypergraph is an implication hypergraph \( H = (V, E) \) that satisfies the following properties:

\begin{enumerate}[label=(\roman*)]
    \item \(\forall v \in V, (v,v) \notin E \).
    \item If \( v \in V\) and \( u \) is a descendant of \( v \) but not an immediate neighbor, then $(v,u)\notin E.$
    \item If $e = (T,H)\in E$, then $e'=(T',H)\notin E$ where $T\subset T'$.
\end{enumerate}
Figure~\ref{fig:impli_hyp} illustrates a minimal implication hypergraph, while Figure~\ref{fig:non-strict} shows a non-minimal implication hypergraph.
\end{definition}

We denote the set of minimal strict implication hypergraphs by \( \mathcal{H} \). Henceforth, our focus will primarily be on implication hypergraphs belonging to this set.

\begin{figure}[h!] 
    \centering
    \includegraphics[width=0.38\textwidth]{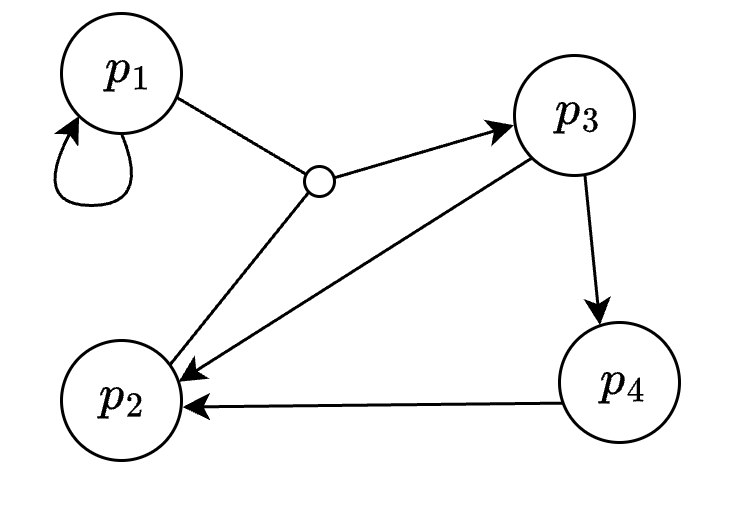}
    \caption{An example of a non-minimal implication hypergraph.}
    \label{fig:non-strict}
\end{figure}

\section{Propositional Information}

In our model of propositional information, we are guided by the following principles:

\begin{enumerate}[label=(\roman*)]
    \item If \( P \implies Q \), then \( P \) encompasses all the information contained in \( Q \).
    \item For a leaf node (proposition) \( Q \), the information it contains is a fixed, strictly positive value, denoted by \( \nu \).
    \item If \( P \implies Q \) but \( Q \not\implies P \), then \( P \) contains strictly more information than \( Q \). This additional information is in part quantified by a strictly positive value, denoted by \( \epsilon \).
    \item If \( P_1, P_2, \ldots, P_n \) collectively imply \( Q \), then each \( P_i \) contributes an equal share of the information leading to \( Q \), specifically \(\frac{1}{n}\) of the total information derived from \( Q \).
\end{enumerate}

The final point generalizes the first principle to scenarios involving hyperedges, where multiple propositions collectively imply a single proposition.

Before defining propositional information, we first define the adjacency matrix of an implication hypergraph. 

\begin{definition}[Adjacency Matrix]\label{def:prop_info}
Let \( H = (V, E) \) be an implication hypergraph. We denote its vertices by \( V = \{p_1,p_2,\ldots,p_n\} \). 
Let \( E_i = \{e \in E \mid p_i \in h(e)\} \). Then the adjacency matrix of \( H \), \( A \), is defined by:
\[
A_{ij} = \sum_{\substack{e \in E_i}} \frac{1}{|h(e)|} \mathbf{1}_{p_j \in t(e)}
\]
for \( 1 \leq i, j \leq n \).
\end{definition}

As an example, the adjacency matrix $A$ corresponding to the implication hypergraph shown in Figure~\ref{fig:impli_hyp} is:
\[
A = \begin{bmatrix}
0 & 0 & \nicefrac{1}{2} & 0 \\
0 & 0 & \nicefrac{1}{2} & 1 \\
0 & 0 & 0 & 1 \\
0 & 0 & 0 & 0
\end{bmatrix}
\]

\begin{definition}[Propositional Information]\label{def:prop_info}
Let $H\in \mathcal{H}$, and $\nu, \epsilon>0$. Let $A$ denote the adjacency matrix of $H=(V,E)$, where $V=\{p_1,\ldots,p_n\}$. For $p_i\in V$, we define its propositional information as follows:
\[
\mathcal{I}_{\nu, \epsilon}(p_i) =
\begin{cases}
\displaystyle \sum_{j=1}^{n} A_{ij} (\mathcal{I}_{\nu, \epsilon}(p_j) + \epsilon), & \text{if } p_i \notin L(H), \\
\nu, & \text{if } p_i \in L(H).
\end{cases}
\]

\end{definition}

The definition of propositional information is implicit by nature, requiring the resolution of a system of equations where \( \mathcal{I}_{\nu, \epsilon}(p_1), \ldots, \mathcal{I}_{\nu, \epsilon}(p_n) \) are the unknowns. Notably, this definition neither ensures the existence of a solution nor guarantees that the solutions, if they exist, are strictly positive. The proposition below addresses the issue of existence.

\begin{proposition}\label{prop:well-defined}
\normalfont
Let $H\in \mathcal{H}$, and $\nu, \epsilon>0$. Let $A$ denote the adjacency matrix of $H=(V,E)$, where $V=\{p_1,\ldots,p_n\}$. Let \( \mathbf{1} \) be the vector of $n$ ones, and \( l \) be a vector where the \( i \)-th entry is 1 if \( p_i \) is a leaf node, and 0 otherwise. If \( A - I \) is invertible, or equivalently if 1 is not an eigenvalue of \( A \), then \( \mathcal{I}_{\nu, \epsilon} = (\mathcal{I}_{\nu, \epsilon}(p_1), \ldots, \mathcal{I}_{\nu, \epsilon}(p_n)) \) is well-defined, and

\[
\mathcal{I}_{\nu, \epsilon} = (I - A)^{-1} \left( \epsilon A \mathbf{1} + \nu l \right).
\]
\end{proposition}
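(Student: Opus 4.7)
The plan is to recast the piecewise recursion as a single linear system whose unique solution exists under the stated invertibility hypothesis.

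First, I would merge the two branches of the piecewise definition of $\mathcal{I}_{\nu,\epsilon}$ into a uniform identity. The crucial observation is that for any leaf $p_i \in L(H)$, the $i$-th row of $A$ is identically zero: by construction $A_{ij}$ is summed over $E_i = \{e \in E : p_i \in h(e)\}$, and this index set is empty precisely when $p_i$ is a leaf. Consequently, for every $p_i \in V$ one can write
\[
\mathcal{I}_{\nu,\epsilon}(p_i) \;=\; \sum_{j=1}^{n} A_{ij}\bigl(\mathcal{I}_{\nu,\epsilon}(p_j) + \epsilon\bigr) \;+\; \nu\, l_i,
\]
since for non-leaves $l_i = 0$ and the sum reproduces the defining formula, while for leaves the sum vanishes and $\nu l_i = \nu$ recovers the base value.

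Stacking these $n$ scalar identities into a vector equation produces
\[
\mathcal{I}_{\nu,\epsilon} \;=\; A\,\mathcal{I}_{\nu,\epsilon} \;+\; \epsilon\, A\mathbf{1} \;+\; \nu\, l,
\]
which rearranges to $(I - A)\,\mathcal{I}_{\nu,\epsilon} = \epsilon\, A\mathbf{1} + \nu\, l$. The equivalence between invertibility of $A - I$ and $1$ failing to be an eigenvalue of $A$ is standard: $A - I$ is singular precisely when some nonzero vector $v$ satisfies $Av = v$; moreover $I - A = -(A - I)$, so invertibility of $A - I$ is equivalent to invertibility of $I - A$. Pre-multiplying by $(I - A)^{-1}$ then yields the claimed closed form, and this simultaneously furnishes both existence and uniqueness of $\mathcal{I}_{\nu,\epsilon}$.

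The only substantive point is the unified recursion in the first step, which depends on matching the notion of leaf (as used in the indicator $l$) with the structural observation that the corresponding row of $A$ vanishes. Everything afterwards is routine linear algebra, with no genuine obstacle to overcome.
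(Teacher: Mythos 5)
Your proof is correct and follows the only natural route; the paper in fact states Proposition~\ref{prop:well-defined} without giving a proof, treating exactly this vectorisation of the defining system as immediate. Your one substantive step --- that a leaf's row of $A$ vanishes, so the two branches of the definition merge into the single identity $\mathcal{I}_{\nu,\epsilon}=A\,\mathcal{I}_{\nu,\epsilon}+\epsilon A\mathbf{1}+\nu l$ --- is consistent with how the paper actually uses $L(H)$ (e.g.\ $l=(0,0,0,1)$ together with the zero fourth row of $A$ in the Figure~\ref{fig:impli_hyp} example), after which the rearrangement to $(I-A)\,\mathcal{I}_{\nu,\epsilon}=\epsilon A\mathbf{1}+\nu l$ and inversion under the stated hypothesis yield both existence and uniqueness exactly as claimed.
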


Considering once again the implication hypergraph in Figure~\ref{fig:impli_hyp}, we observe that \( A - I \) is invertible. Consequently, \( \mathcal{I}_{\nu, \epsilon} = (\mathcal{I}_{\nu, \epsilon}(p_1), \ldots, \mathcal{I}_{\nu, \epsilon}(p_4)) \) is well-defined. Here, \( l = (0, 0, 0, 1) \). Applying the formula, we obtain:
\[
\renewcommand{\arraystretch}{1.2}
\mathcal{I}_{\nu, \epsilon} = \begin{bmatrix}
\tfrac{1}{2}\nu + \epsilon\\
\tfrac{3}{2}\nu + 2\epsilon \\
\nu + \epsilon\\
\nu
\end{bmatrix}
\]

Since we associate the value of information to be positive, we would ideally aim to obtain all positive values through Definition~\ref{def:prop_info}. The following definition specifies those implication hypergraphs that satisfy this condition exactly.

\begin{definition}[Configured Implication Hypergraph]\label{def:rep}
Let \( H = (V, E) \in \mathcal{H} \) and let \( \nu, \epsilon > 0 \). The vertices of \( H \) are denoted by \( V = \{p_1, p_2, \ldots, p_n\} \). The hypergraph \( H \) is called a \emph{configured} implication hypergraph if \( \mathcal{I}_{\nu, \epsilon}(p_i) > 0 \) for all \( i \in \{1, \ldots, n\} \), where \( \mathcal{I}_{\nu, \epsilon}(p_i) \) is as defined in Definition~\ref{def:prop_info}.
\end{definition}

\begin{theorem}\label{thm:nec}
\normalfont
Let \( H \in \mathcal{H} \), and \( A \) denote its adjacency matrix. If \( H \) is \emph{configured}, then for all \( i \in \{1, \ldots, n\} \),
\[
0 \leq \sum_{j=1}^{n} A_{ij} A_{ji} < 1.
\]
\end{theorem}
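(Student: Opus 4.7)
The lower bound $\sum_{j} A_{ij} A_{ji} \geq 0$ is immediate, since $A$ has non-negative entries by construction. For the strict upper bound, observe that $\sum_{j} A_{ij} A_{ji} = (A^2)_{ii}$; the plan is to substitute the linear system defining $\mathcal{I}_{\nu,\epsilon}$ into itself and extract a sign condition from the configured hypothesis.

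First I would rewrite Definition~\ref{def:prop_info} as the single vector identity
\[
\mathcal{I}_{\nu,\epsilon} \;=\; A\,\mathcal{I}_{\nu,\epsilon} + \epsilon\, A\mathbf{1} + \nu\, l,
\]
which is equivalent to the expression in Proposition~\ref{prop:well-defined} and is uniformly valid at leaf and non-leaf coordinates: for a leaf $p_j$, the row $A_{j\cdot}$ is zero and the right-hand side collapses to $\nu\, l_j = \nu$, matching Definition~\ref{def:prop_info}. Iterating this identity once yields
\[
\mathcal{I}_{\nu,\epsilon} \;=\; A^{2}\,\mathcal{I}_{\nu,\epsilon} + \epsilon\, A^{2}\mathbf{1} + \nu\, A\,l + \epsilon\, A\mathbf{1} + \nu\, l,
\]
and taking the $i$-th coordinate while isolating the diagonal term gives the pivotal identity
\[
\bigl(1 - (A^{2})_{ii}\bigr)\,\mathcal{I}_{\nu,\epsilon}(p_i) \;=\; \sum_{k\neq i}(A^{2})_{ik}\,\mathcal{I}_{\nu,\epsilon}(p_k) + \epsilon\,(A^{2}\mathbf{1})_i + \nu\,(Al)_i + \epsilon\,(A\mathbf{1})_i + \nu\, l_i.
\]

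The argument then concludes by a short case split on $(A^{2})_{ii}$. Every summand on the right-hand side is non-negative, using non-negativity of $A$, $l$, $\epsilon$, $\nu$ and, by the configured hypothesis, of $\mathcal{I}_{\nu,\epsilon}(p_k)$. If $(A^{2})_{ii} > 1$, the left-hand side is strictly negative whereas the right-hand side is non-negative, a contradiction. If $(A^{2})_{ii} = 1$, the left-hand side vanishes, so every non-negative summand on the right must vanish as well; in particular $\epsilon\,(A\mathbf{1})_i = 0$, forcing $A_{ij} = 0$ for all $j$, and hence $(A^{2})_{ii} = 0$, contradicting the assumption that $(A^{2})_{ii} = 1$. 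Therefore $(A^{2})_{ii} < 1$, as required.

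The self-substitution step is essentially routine linear algebra; I expect the main (though modest) obstacle to be the boundary case $(A^{2})_{ii} = 1$, where one must identify a specific strictly positive contribution on the right-hand side that is pegged to $(A^{2})_{ii}$ being nonzero. The term $\epsilon\,(A\mathbf{1})_i$ plays exactly this role, since any nonzero entry of row $i$ of $A$ forces $(A\mathbf{1})_i$ to be strictly positive.
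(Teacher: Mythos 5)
Your proof is correct and takes essentially the same route as the paper: substitute the defining linear system into itself once, isolate the diagonal coefficient $\sum_{j} A_{ij}A_{ji} = (A^{2})_{ii}$ multiplying $\mathcal{I}_{\nu,\epsilon}(p_i)$, and use the configured hypothesis ($\mathcal{I}_{\nu,\epsilon}>0$) together with non-negativity of the remaining terms to force that coefficient below $1$. Your vectorized identity with the explicit $\nu l$ term and the boundary case $(A^{2})_{ii}=1$ is in fact a slightly more careful rendering (the paper handles leaves and the strict positivity of its remainder term $C$ more loosely), but the underlying argument is the same.
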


\begin{proof}
Let $ H = (V, E)$, where \( V = \{p_1, p_2, \ldots, p_n\} \). Since all values of $A$ are positive by definition, the left inequality follows immediately. We will now consider the right inequality. Let $k \in \{1,\ldots,n\}$. If $p_k\in L(H)$, then for all $j \in \{1,\ldots,n\}$, $A_{jk}=0$, and hence the inequality is easily verified. Conversely, suppose that $p_k\notin L(H)$. Then, by Definition~\ref{def:prop_info}, we know that
\begin{align*}
\mathcal{I}_{\nu, \epsilon}(p_k) &= \sum_{j=1}^{n} A_{kj} (\mathcal{I}_{\nu, \epsilon}(p_j) + \epsilon) \\
&= \sum_{j=1}^{n} A_{kj} \left(\sum_{l=1}^{n} A_{jl}(\mathcal{I}_{\nu, \epsilon}(p_l) + \epsilon) + \epsilon\right) \\
&= \sum_{j=1}^{n} A_{kj}A_{jk} \mathcal{I}_{\nu, \epsilon}(p_k) \: + \:C,
\end{align*}

where $C>0$, since for all $i \in \{1,\ldots,n\}$, $\mathcal{I}_{\nu, \epsilon}(p_i) > 0$, as $H$ is \emph{configured}. Thus, we get
\begin{align*}
&\mathcal{I}_{\nu, \epsilon}(p_k)\left(1-\sum_{j=1}^{n} A_{kj}A_{jk}\right)=C.
\end{align*}
Finally, since $\mathcal{I}_{\nu, \epsilon}(p_k) > 0$,
\begin{align*}
\sum_{j=1}^{n} A_{kj}A_{jk} < 1.
\end{align*}

\end{proof}

\begin{remark}
The necessary condition in Theorem~\ref{thm:nec} is not sufficient. Consider the following example.

\begin{figure}[H] 
    \centering
    \includegraphics[width=0.36\textwidth]{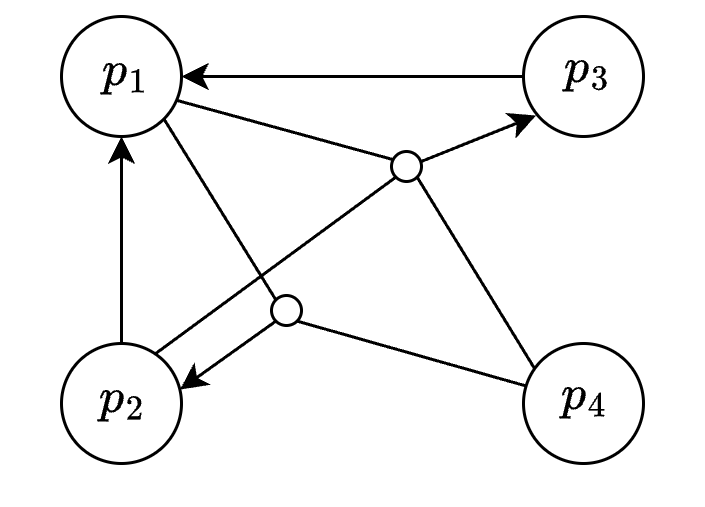}
    \label{fig:non-strict}
\end{figure}

Its adjacency matrix $A$ is given by:
\[
\renewcommand{\arraystretch}{1.2}
A = \begin{bmatrix}
0 & \nicefrac{1}{2} & \nicefrac{1}{3} & 0\\
1 & 0 & \nicefrac{1}{3} & 0\\
1 & 0 & 0 & 0\\
0 & \nicefrac{1}{2} & \nicefrac{1}{3} & 0
\end{bmatrix}
\]

and it satisfies the following condition:
\[ \forall i \in \{1, \ldots, n\},\;
0 \leq \sum_{j=1}^{n} A_{ij} A_{ji} < 1.
\]
Since $p_1$ is not a leaf node:
{\small
\begin{align*}
\mathcal{I}_{\nu, \epsilon}(p_1) &= \frac{1}{2}\mathcal{I}_{\nu, \epsilon}(p_2)+\frac{1}{2}\epsilon + \frac{1}{3}\mathcal{I}_{\nu, \epsilon}(p_3)+\frac{1}{3}\epsilon\\
&= \frac{1}{2}\mathcal{I}_{\nu, \epsilon}(p_1)+\frac{1}{6}\mathcal{I}_{\nu, \epsilon}(p_3)+\frac{1}{3}\mathcal{I}_{\nu, \epsilon}(p_1)+C\\
&=\frac{1}{2}\mathcal{I}_{\nu, \epsilon}(p_1)+\frac{1}{6}\mathcal{I}_{\nu, \epsilon}(p_1)+\frac{1}{3}\mathcal{I}_{\nu, \epsilon}(p_1)+C,
\end{align*}
}
where $C>0$. Thus we finally obtain $\mathcal{I}_{\nu, \epsilon}(p_1)=\mathcal{I}_{\nu, \epsilon}(p_1)+C$, which is impossible. Thus $\mathcal{I}_{\nu, \epsilon}$ is not well-defined, and hence the implication hypergraph is not representable.

\end{remark}

\begin{theorem}\label{thm:example2}
\normalfont
Let \( H=(V, E)\in \mathcal{H} \), where \( V = \{p_1, p_2, \ldots, p_n\} \). Let \( A \) denote its adjacency matrix. If for all \( k \in \{1, \ldots, n\} \), the diagonal of \( A^k \) is zero, i.e., \( \text{diag}(A^k) = \mathbf{0} \), then \( H \) is \emph{configured}.
\end{theorem}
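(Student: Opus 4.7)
The plan is to deduce from the hypothesis that $A$ is nilpotent, then exploit this nilpotency twice: first to invoke Proposition~\ref{prop:well-defined} (so $\mathcal{I}_{\nu,\epsilon}$ is well-defined), and second to rewrite $(I-A)^{-1}$ as a finite Neumann sum with nonnegative entries, after which strict positivity of $\mathcal{I}_{\nu,\epsilon}$ reduces to a one-step leaf/non-leaf case split on the degree-zero term.

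First I would reduce the hypothesis to nilpotency. Since $\mathrm{diag}(A^k) = \mathbf{0}$ for every $k \in \{1, \ldots, n\}$, summing over the diagonal gives $\mathrm{tr}(A^k) = 0$ for every $k \in \{1, \ldots, n\}$. Because $\mathrm{tr}(A^k)$ is the $k$-th power sum of the eigenvalues of $A$, Newton's identities force every elementary symmetric polynomial of the eigenvalues to vanish, so the characteristic polynomial of $A$ equals $\lambda^n$. Cayley--Hamilton then yields $A^n = 0$; in particular $1$ is not an eigenvalue of $A$, and Proposition~\ref{prop:well-defined} applies, giving well-definedness of $\mathcal{I}_{\nu,\epsilon}$.

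Next I would use the finite Neumann expansion. Since $A^n = 0$, one has $(I - A)^{-1} = \sum_{k=0}^{n-1} A^k$, and therefore
\[
\mathcal{I}_{\nu, \epsilon} = \sum_{k=0}^{n-1} A^k \bigl(\epsilon A \mathbf{1} + \nu l\bigr).
\]
All entries of $A$, hence of every $A^k$, are nonnegative by the definition of the adjacency matrix, and the vector $\epsilon A \mathbf{1} + \nu l$ is also componentwise nonnegative, so each summand on the right is componentwise nonnegative. It then suffices to check that the $k=0$ term, namely $\epsilon A \mathbf{1} + \nu l$ itself, is strictly positive in every coordinate: if $p_i \in L(H)$ then the $i$-th entry is at least $\nu l_i = \nu > 0$; if $p_i \notin L(H)$, then $p_i$ appears in the tail of at least one hyperedge, so the $i$-th row of $A$ contains a strictly positive entry and $(A \mathbf{1})_i > 0$ contributes $\epsilon (A\mathbf{1})_i > 0$. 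Either way, $\mathcal{I}_{\nu,\epsilon}(p_i) > 0$, and $H$ is configured.

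The main obstacle is the first step: extracting nilpotency of $A$ from the vanishing of $\mathrm{tr}(A^k)$ for $k = 1, \ldots, n$. This is classical, via Newton's identities together with Cayley--Hamilton, but it is the only genuinely non-combinatorial ingredient; once $A^n = 0$ is in hand, the Neumann expansion and the leaf/non-leaf coordinate check are essentially immediate from the definitions of Section~3.
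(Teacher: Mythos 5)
Your proof is correct, but it follows a genuinely different route from the paper's. For well-definedness, the paper argues only that $1$ cannot be an eigenvalue of $A$, by claiming that an eigenvalue $1$ would force $\operatorname{trace}(A^2)>0$, contradicting $\operatorname{diag}(A^2)=\mathbf{0}$; your Newton's-identities/Cayley--Hamilton argument uses the full hypothesis $\operatorname{tr}(A^k)=0$ for $k=1,\ldots,n$ to conclude that $A$ is nilpotent, which is both a stronger conclusion and a more watertight one (the paper's single-trace step is delicate, since squares of complex eigenvalues of a real matrix can have negative real part, whereas your argument is immune to this). For positivity, the paper proceeds by contradiction: assuming $\mathcal{I}_{\nu,\epsilon}(p_i)\le 0$ for some non-leaf $p_i$, it descends along hyperedges through vertices of non-positive information, uses $\operatorname{diag}(A^k)=\mathbf{0}$ to infer acyclicity so the descent must terminate at a leaf, and derives a contradiction from $\mathcal{I}_{\nu,\epsilon}=\nu>0$ at that leaf. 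You instead exploit nilpotency a second time to write $(I-A)^{-1}=\sum_{k=0}^{n-1}A^k$, obtaining the componentwise bound $\mathcal{I}_{\nu,\epsilon}\ge \epsilon A\mathbf{1}+\nu l$, and then check the degree-zero term directly by the leaf/non-leaf split; this is a direct (non-contradiction) argument and even yields an explicit quantitative lower bound. Note that both your step ``$p_i\notin L(H)$ implies $(A\mathbf{1})_i>0$'' and the paper's step ``$S\neq\emptyset$'' rest on the same reading that a non-leaf vertex has a nonzero row in $A$, so you are not assuming anything beyond what the paper itself uses. A small remark: once you know the vanishing diagonals of $A^k$ mean there are no closed walks, acyclicity already gives nilpotency via a topological ordering, so Newton's identities could be bypassed if you wanted an entirely combinatorial proof.
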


\begin{proof}
The case \( n = 1 \) is straightforward, as the graph consists of a single node, which is necessarily a leaf node. Consequently, its information value is \( \nu > 0 \). Now, let us consider the case where \( n \geq 2 \).

We prove by contradiction that \( 1 \) cannot be an eigenvalue of \( A \). Assume, for the sake of contradiction, that \( 1 \) is an eigenvalue of \( A \). Since the eigenvalues of \( A^2 \) are the squares of the eigenvalues of \( A \), this would imply that \( \text{trace}(A^2) > 0 \). However, by our hypothesis, we have \( \text{diag}(A^2) = \mathbf{0} \), leading to a contradiction. Therefore, \( 1 \) cannot be an eigenvalue of \( A \). Consequently, by Proposition~\ref{prop:well-defined}, \( \mathcal{I}_{\nu, \epsilon} = (\mathcal{I}_{\nu, \epsilon}(p_1), \ldots, \mathcal{I}_{\nu, \epsilon}(p_n)) \) is well-defined.

Let $p_i \in V$. If $p_i \in L(H)$, then $I_{\nu, \epsilon}(p_i) = \nu > 0$. Suppose $p_i \notin L(H)$. To show that $I_{\nu, \epsilon}(p_i) > 0$, we proceed by contradiction. Suppose that $I_{\nu, \epsilon}(p_i) \leq 0$. By Definition~\ref{def:prop_info}:
\[
I_{\nu, \epsilon}(p_i)=\sum_{j=1}^{n} A_{ij} (\mathcal{I}_{\nu, \epsilon}(p_j) + \epsilon).
\]
For all \( i, j \in \{1, \ldots, n\} \), \( A_{ij} \geq 0 \). Let \( S \subset \{1, \ldots, n\} \) be the set of indices \( s \) such that \( A_{is} > 0 \) for all \( s \in S \). Since \( p_i \notin L(H) \), we have \( S \neq \emptyset \). Therefore, there exists \( i_1 \in S \) such that \( \mathcal{I}_{\nu,\epsilon}(p_{i_1}) \leq 0 \). Note that there exists a hyperedge \( e \in E \) such that \( i \in H(e) \) and \( i_1 \in T(e) \). By repeatedly applying this reasoning, we construct a sequence 
$Q = (p_{i_1}, p_{i_2}, \ldots)$ of vertices such that:
\begin{enumerate}
    \item For each pair of consecutive vertices \( p_{i_k}, p_{i_{k+1}} \) in \( Q \), there exists a hyperedge connecting them.
    \item \( \forall p_{i_k} \in Q \), \( \mathcal{I}_{\nu,\epsilon}(p_{i_k}) \leq 0 \).
    \item Q terminates at $p_{i_k}$ if and only if $p_{i_k}\in L(H)$.
\end{enumerate}

Since \( \operatorname{diag}(A^k) = \mathbf{0} \) for all \( k \in \{1, \ldots, n\} \), \( H \) is acyclic. Hence, the sequence \( Q \) must be finite. Let $ Q = (p_{i_1}, p_{i_2}, \ldots, p_{i_m}), $
where \( m < n \), and consequently \( p_{i_m} \in L(H) \). However, if \( p_{i_m} \in L(H) \), then 
$ \mathcal{I}_{\nu,\epsilon}(p_{i_m}) = \nu > 0, $
which contradicts our earlier assumption that \( \mathcal{I}_{\nu,\epsilon}(p_{i_k}) \leq 0 \) for all \( p_{i_k} \in Q \).

Thus, \( \mathcal{I}_{\nu,\epsilon}(p_i) > 0 \) holds for all \( p_i \notin L(H) \), completing the proof.

\end{proof}

\begin{remark}
The sufficient condition in Theorem~\ref{thm:example2} is not necessary. Consider the following example.
\begin{figure}[H] 
    \centering
    \includegraphics[width=0.3\textwidth]{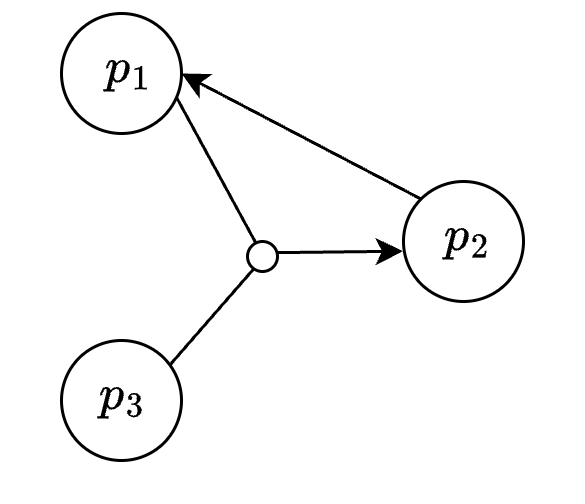}
    \label{fig:non-strict}
\end{figure}

Its adjacency matrix $A$ is given by:
\[
\renewcommand{\arraystretch}{1.2}
A = \begin{bmatrix}
0 & \nicefrac{1}{2} & 0\\
1 & 0 & 0\\
0 & \nicefrac{1}{2} & 0\\
\end{bmatrix}
\]
and \( \mathcal{I}_{\nu,\epsilon}\) can easily found to be equal to $(2\epsilon,3\epsilon,2\epsilon)$. However, $diag(A^2)=(\nicefrac{1}{2}, \nicefrac{1}{2},0)\neq\mathbf{0}$.

\end{remark}

\section{Applications}
The extensive corpus of mathematical theorems and results that underline the intricate interconnectedness of mathematical ideas serves as an ideal domain for applying the concepts developed in the preceding sections. Our framework enables the distillation of a set of mathematical results into a single implication hypergraph, allowing us to quantify the information content of each mathematical entity. However, obtaining the final numerical values requires specifying the parameters $\nu$ and $\epsilon$. 

\subsection{The role of $\epsilon$ and $\nu$}
In a strict implication hypergraph, if a proposition \( p_1 \) implies another proposition \( p_2 \), the information content of \( p_1 \) must be strictly greater than that of \( p_2 \), and \( \epsilon \) ensures this. However, if \( p_1 \) implies additional propositions beyond \( p_2 \), the cumulative implications naturally ensure that \( p_1 \)’s information content remains strictly greater than those it implies. This raises the question: why is \( \epsilon \) necessary?  

The key idea is that if the hypergraph is not 'complete', which it may never be, or if certain relationships are omitted, the computed information content may underestimate the 'true' value. The parameter \( \epsilon \) addresses this incompleteness by introducing a buffer, attempting to bring the calculated information content closer to the true value. Consequently, the choice of \( \epsilon \) reflects our belief in the completeness of the hypergraph: a relatively small \( \epsilon \) ((compared to $\nu$) indicates high confidence in its completeness, while a larger \( \epsilon \) compensates for greater uncertainty or perceived omissions.

\(\nu\), on the other hand, represents a unit of information content. All leaf nodes (propositions) are assigned the same information content, as they are, in a sense, the fundamental 'atoms' of information. While the exact value of \(\nu\) is not particularly significant, its relative magnitude compared to \(\epsilon\) is important, as discussed in the previous paragraph.

\subsection{Examples}
As an illustrative application to mathematical propositions, we first consider a set of 14 statements commonly encountered in an introductory course on analysis (e.g., "X is closed," "X is compact," etc.). Formally, we assume \( X \) is a finite-dimensional metric space, \( f \) is a function mapping \( X \) to another metric space \( Y \), and \( (u_n) \) is a sequence in \( X \). The mathematical propositions are constructed around these entities. The implication hypergraph corresponding to these 14 propositions is depicted below.

\begin{figure}[H] 
    \centering
    \includegraphics[width=1\textwidth]{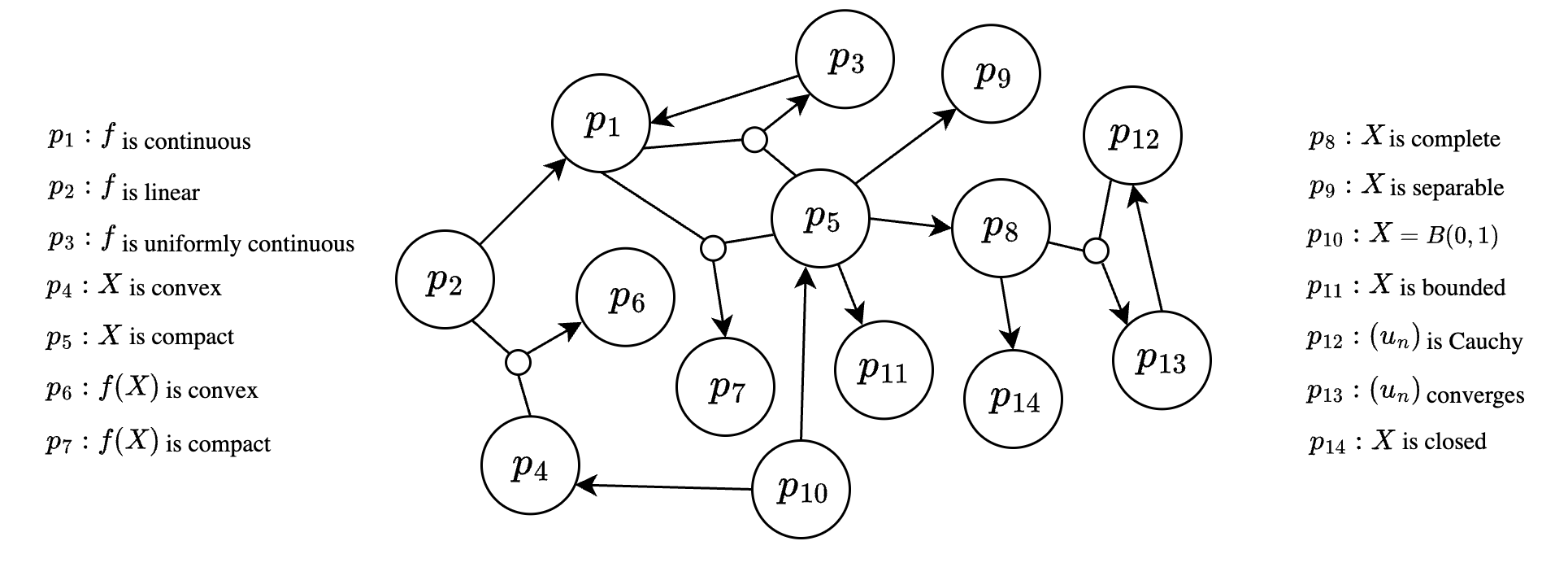}
    \label{fig:big-example}
\end{figure}

Given \( \epsilon, \nu > 0 \), the propositional information vector \( \mathcal{I}_{\nu, \epsilon} \) for this hypergraph is well-defined. For simplicity, we write \( \mathcal{I}_i = \mathcal{I}_{\nu, \epsilon}(p_i) \) for $i\in\{1,\ldots,n\}$. Using the formula provided in Proposition~\ref{prop:well-defined}, we obtain:

{\small
\[
\begin{array}{l@{\hspace{1cm}}l}
\mathcal{I}_1 = \nu + 3\epsilon & \mathcal{I}_8 = \nu + 3\epsilon \\
\mathcal{I}_2 = 1.5\nu + 4.5\epsilon & \mathcal{I}_9 = \nu \\
\mathcal{I}_3 = \nu + 4\epsilon & \mathcal{I}_{10} = 4.5\nu + 10.5\epsilon \\
\mathcal{I}_4 = 0.5\nu + 0.5\epsilon & \mathcal{I}_{11} = \nu \\
\mathcal{I}_5 = 4\nu + 8\epsilon & \mathcal{I}_{12} = 2\epsilon \\
\mathcal{I}_6 = \nu & \mathcal{I}_{13} = 3\epsilon \\
\mathcal{I}_7 = \nu & \mathcal{I}_{14} = \nu \\
\end{array}
\]
}

To illustrate the magnitudes, let us consider the example with \( \nu = 0.5 \) and \( \epsilon = 1 \). Substituting these values into the expressions derived earlier, we compute the information content of the 14 mathematical propositions:

{\small
\[
\begin{array}{l@{\hspace{1cm}}l}
f \text{ is continuous}: 3.5 & X \text{ is complete}: 3.5 \\
f \text{ is linear} : 5.25 & X \text{ is separable} : 0.5 \\
f \text{ is uniformly continuous} : 4.5 & X=\mathcal{B}(0,1) : 12.75 \\
X \text{ is convex} : 0.75 & X \text{ is bounded} :0.5 \\
X \text{ is compact} : 10 & (u_n) \text{ is Cauchy} : 2 \\
f(X) \text{ is convex} : 0.5 & (u_n) \text{ converges}:3\\
f(X) \text{ is compact} : 0.5 & X \text{ is closed} = 0.5 \\
\end{array}
\]
}

As another example, we consider a set of 12 statements commonly encountered in an introductory course on optimization. Formally, we assume $X\subset \mathbb{R}^n$, $f$ a function from $X$ to $\mathbb{R}$, and $x^*\in X$. We also consider the following optimization problem:
\[
\begin{array}{ll}
\text{minimize} & f(x) \\
\text{such that} & x \in X
\end{array}
\tag{P}
\]

\begin{figure}[H] 
    \centering
    \includegraphics[width=1\textwidth]{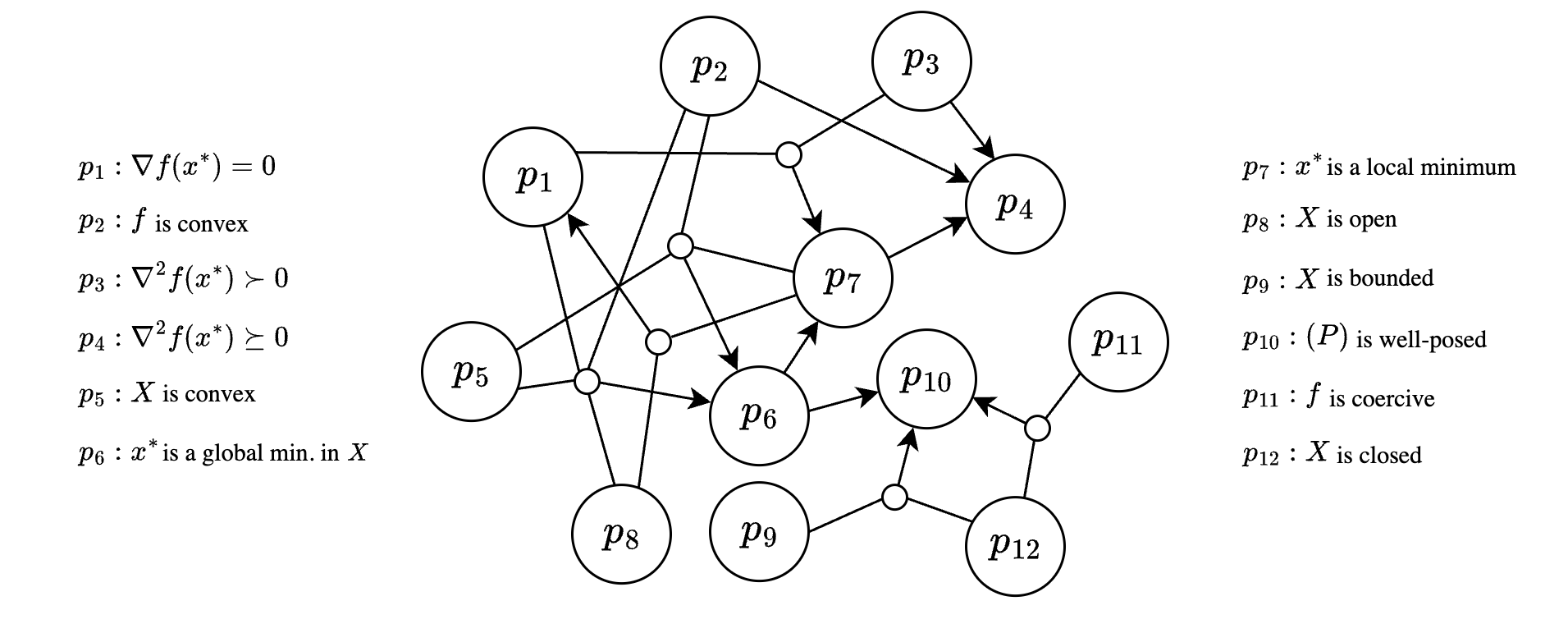}
    \label{fig:big-example2}
\end{figure}

Given \( \epsilon, \nu > 0 \), the propositional information vector \( \mathcal{I}_{\nu, \epsilon} \) for this hypergraph is well-defined. Like before, we write \( \mathcal{I}_i = \mathcal{I}_{\nu, \epsilon}(p_i) \) for $i\in\{1,\ldots,n\}$. Using the formula provided in Proposition~\ref{prop:well-defined}, we obtain:

{\small
\[
\begin{array}{l@{\hspace{1cm}}l}
\mathcal{I}_1 = 4\nu + 9.29\epsilon & \mathcal{I}_7 = 5\nu + 10.71\epsilon \\
\mathcal{I}_2 = 4.5\nu + 9\epsilon & \mathcal{I}_8 = 3.5\nu+8.57\epsilon \\
\mathcal{I}_3 = 3.5\nu + 6.86\epsilon & \mathcal{I}_{9} = 0.5\nu + 0.5\epsilon \\
\mathcal{I}_4 = \nu & \mathcal{I}_{10} = \nu \\
\mathcal{I}_5 = 3.5\nu + 8\epsilon & \mathcal{I}_{11} = 0.5\nu+0.5\epsilon \\
\mathcal{I}_6 = 6\nu+12.71\epsilon & \mathcal{I}_{12} = \nu+\epsilon \\
\end{array}
\]
}

To illustrate the magnitudes, let us consider the example with \( \nu = 1 \) and \( \epsilon = 0.5 \). Substituting these values into the expressions derived earlier, we compute the information content of the 12 mathematical propositions:

{\small
\[
\begin{array}{l@{\hspace{1cm}}l}
\nabla f(x^*)=0: 8.64 & x^* \text{ is a local minimum}: 10.36 \\
f \text{ is convex} : 9 & X \text{ is open} : 7.79 \\
\nabla^2f(x^*) \succ0 : 6.93 & X \text{ is bounded} : 0.75 \\
\nabla^2f(x^*) \succeq0 : 1 & (P) \text{ is well-posed} :1 \\
X \text{ is convex} : 7.5 & f \text{ is coercive} : 0.75 \\
x^* \text{ is a global min. in } X : 12.36 & X \text{ is closed}:1.5\\
\end{array}
\]
}

\section{Discussion}
It may perhaps be theoretically possible to condense most, if not all, of mathematics into an implication hypergraph similar to the ones presented in the previous section. Achieving this would require identifying a comprehensive set of mathematical propositions, simple yet versatile, that can serve as building blocks for representing the vast corpus of mathematical theorems and results. These propositions would serve as the nodes of this envisioned hypergraph. An intriguing reference for such an endeavor could be Bertrand Russell's \emph{Principia Mathematica} \cite{Russell1910-RUSPMV}, which systematically formalizes and constructs several fields of mathematics from the ground up. Such a hypergraph, if realized, could provide a striking visual representation of the interconnectedness of mathematical ideas. Furthermore, the concepts developed in this work could then quantify the information content of each of these fundamental mathematical propositions.

This work introduces a straightforward method for quantifying propositional information using implication hypergraphs, while leaving ample room for further exploration and refinement. Potential avenues for future research can include:
\begin{itemize}  
    \item Analysing the roles of \( \nu \) and \( \epsilon \) in depth, and establishing criteria for determining their optimal values. 
    \item Defining the information of the conjunction of two logical propositions, and eventually generalising it.  
    \item Revisiting the assumption that, when a group of propositions implies another proposition, each proposition in the group contributes equally to the implication. This oversimplification warrants further refinement.  
    \item Incorporating additional nuances into the current framework to more effectively capture the factors that contribute to a proposition's informativeness.
 
\end{itemize}

\section{Conclusion}
In this work, we introduced a method for quantifying the information content of a logical proposition using implication hypergraphs. We proposed that the information content of a proposition is closely tied to its implication relationships with other propositions; specifically, the more propositions it implies, the more informative it is. Building on this premise, we formally defined propositional information and derived several key properties and results. We then discussed how mathematical propositions provide an ideal domain for applying this framework, and subsequently illustrated this with two examples. Finally, we examined the limitations of the current framework and outlined potential avenues for refinement.

\bibliography{ref}

@article{https://doi.org/10.1002/j.1538-7305.1948.tb01338.x,
author = {Shannon, C. E.},
title = {A Mathematical Theory of Communication},
journal = {Bell System Technical Journal},
volume = {27},
number = {3},
pages = {379-423},
doi = {https://doi.org/10.1002/j.1538-7305.1948.tb01338.x},
url = {https://onlinelibrary.wiley.com/doi/abs/10.1002/j.1538-7305.1948.tb01338.x},
eprint = {https://onlinelibrary.wiley.com/doi/pdf/10.1002/j.1538-7305.1948.tb01338.x},
year = {1948}
}

@book{Russell1910-RUSPMV,
	author = {Bertrand Russell and Alfred North Whitehead},
	editor = {},
	publisher = {Cambridge University Press},
	title = {Principia Mathematica Vol. I},
	year = {1910}
}

@article{logicasvector,
author = {Westphal, Jonathan and Hardy, Jim},
year = {2005},
month = {10},
pages = {},
title = {Logic as a Vector System},
volume = {15},
journal = {Journal of Logic and Computation},
doi = {10.1093/logcom/exi040}
}

@InProceedings{propspace,
author="Gomide, Walter
and dos Reis, Tiago S.
and Anderson, James A. D. W.",
editor="Kim, Haeng Kon
and Amouzegar, Mahyar A.
and Ao, Sio-long",
title="Transreal Logical Space of All Propositions",
booktitle="Transactions on Engineering Technologies",
year="2015",
publisher="Springer Netherlands",
address="Dordrecht",
pages="227--242",
abstract="Transreal numbersTransreal numbersprovide a total semanticsTotal semanticscontaining classical truth values, dialetheaic, fuzzy and gap values. A paraconsistent Sheffer Stroke generalises all classical logics to a paraconsistent form. We introduce logical spacesLogical spacesof all possible worlds and all propositions. We operate on a proposition, in all possible worlds, at the same time. We define logical transformations, possibility and necessity relations, in proposition space, and give a criterion to determine whether a proposition is classical. We show that proofs, based on the conditional, infer gaps only from gaps and that negative and positive infinity operate as bottom and top values.",
isbn="978-94-017-7236-5"
}

@inproceedings{inproceedings,
author = {Konieczny, Sébastien and Marquis, Pierre},
year = {2003},
month = {01},
pages = {106-111},
title = {Quantifying information and contradiction in propositional logic through test actions.}
}

@InProceedings{propformulae,
author="Franc{\`e}s de Mas, Jordina",
editor="Bowles, Juliana
and S{\o}ndergaard, Harald",
title="Binary Implication Hypergraphs for the Representation and Simplification of Propositional Formulae",
booktitle="Logic-Based Program Synthesis and Transformation",
year="2024",
publisher="Springer Nature Switzerland",
address="Cham",
pages="64--81",
abstract="Propositional simplification and preprocessing are of key importance in the fields of automated reasoning and theorem proving. We present a novel propositional formula representation able to capture all of its n-ary implication information in a tractable manner. From the exploration of this novel encoding in the form of a directed hypergraph, we derive a novel simplification rule which is guaranteed to be equivalence-preserving, monotonically decreasing in the size of the problem, and capable of deep inference. We are not aware of any such generic preprocessing framework capable of systematically simplifying propositional problems in arbitrary form. Interestingly, our rule effectively generalises and streamlines most of the known equivalence-preserving SAT preprocessing techniques. Additionally, since our problem transformations are domain- and application-independent, they can be used in combination with any propositional-logic-based techniques, including those currently used in automated reasoning, solving and optimisation tools.",
isbn="978-3-031-71294-4"
}

@article{Verdee2024,
  author    = {Peter Verdée and Pierre Saint-Germier and Pilar Terrés Villalonga},
  title     = {Connecting the dots: hypergraphs to analyze and visualize the joint-contribution of premises and conclusions to the validity of arguments},
  journal   = {Philosophical Studies},
  year      = {2024},
  volume    = {181},
  number    = {9},
  pages     = {2361--2390},
  doi       = {10.1007/s11098-024-02141-7},
  url       = {https://doi.org/10.1007/s11098-024-02141-7},
  abstract  = {A detailed analysis of joint-contribution of premises and conclusions in classically valid sequents is presented in terms of hypergraphs. In (Saint-Germier, P., Verdée, P., \& Villalonga, P. T. (2024). Relevant entailment and logical ground. Philosophical Studies (pp. 1–43). https://doi.org/10.1007/s11098-024-02101-1), this idea of joint-contribution is introduced and motivated as a method for characterizing four kinds of relevant validity, in the sense of selecting the relevantly valid sequents among the classically valid sequents. The account in (Saint-Germier, P., Verdée, P., \& Villalonga, P. T. (2024). Relevant entailment and logical ground. Philosophical Studies (pp. 1–43). https://doi.org/10.1007/s11098-024-02101-1) is built on a calculus, called $$\textsf{GLK}^{\hbox {a}}$$, which proves grounding claims for (enthymematically) valid sequents. In the present paper an adequate representation of $$\textsf{GLK}^{\hbox {a}}$$is given in terms of hypergraphs. The hypergraphs are a kind of diagrammatic proofs for Classical Propositional Logic, entirely based on the grounds of premises and conclusions. The hypergraphs and their visualization provide insights into the relations between premises and conclusions and into the way validity is produced by the binding of premises and conclusions via their partial grounds. They visualize the network of elements of the sequent that contribute to its logical validity. Non-contributing (i.e. irrelevant) premises and conclusions are then specified to be those that are disconnected from the network, however one constructs the graphs.},
  issn      = {1573-0883},
}

\end{document}